\newcommand\cyrillic[1]{
    {\fontencoding{OT2}\fontfamily{wncyr}\selectfont #1}
        }
\newcommand\mathcyr[1]{\text{\cyrillic{#1}}}
\newcommand\Sha{\textnormal{\mathcyr{Sh}}} 
\newcommand{\lbincomb}[5]{\ensuremath{
\vcenter{\hbox{\xymatrix@R=.4pc@C=.2pc{ 
            #3\ar@{-}[dr] &&#4\ar@{-}[dl] &\\
            &*+[o][F-]{#2}\ar@{-}[dr]&&#5\ar@{-}[dl]\\
            &&*+[o][F-]{#1}\ar@{-}[d]&\\
            &&*{}&
}}}}}
\newcommand{\rbincomb}[5]{\ensuremath{
\vcenter{\hbox{\xymatrix@R=.4pc@C=.2pc{ 
             &#4\ar@{-}[dr] &&#5\ar@{-}[dl] \\
            #3\ar@{-}[dr]&&*+[o][F-]{#2}\ar@{-}[dl]\\
            &*+[o][F-]{#1}\ar@{-}[d]&&\\
            &*{}&&
}}}}}
\newtheorem{theorem}{Theorem}
\newtheorem{proposition}[theorem]{Proposition}
\theoremstyle{definition}
\newtheorem{remark}[theorem]{Remark}
\def\calF{\mathcal{F}}
\def\calG{\mathcal{G}}
\def\calP{\mathcal{P}}
\def\calQ{\mathcal{Q}}
\def\calS{\mathcal{S}}
\def\calT{\mathcal{T}}
\def\calX{\mathcal{X}}
\def\calY{\mathcal{Y}}
\DeclareMathOperator{\gr}{gr}
\DeclareMathOperator{\Refl}{\textsl{Reflex}}
\DeclareMathOperator{\Perm}{\textsl{Perm}}
\DeclareMathOperator{\CP}{\textsl{CycPerm}}
\DeclareMathOperator{\Lie}{\textsl{Lie}}
\DeclareMathOperator{\PL}{\textsl{PreLie}}
\DeclareMathOperator{\CL}{\textsl{CycLie}}
\DeclareMathOperator{\CPL}{\textsl{CycPreLie}}
\DeclareMathOperator{\RT}{\textsl{RT}}
\DeclareMathAlphabet{\mathbbold}{U}{bbold}{m}{n}
\def\k{\mathbbold{k}}
\begin{document}

\title{Fine structures inside the pre-Lie operad revisited}

\author{Vladimir Dotsenko}

\address{ 
Institut de Recherche Math\'ematique Avanc\'ee, UMR 7501, Universit\'e de Strasbourg et CNRS, 7 rue Ren\'e-Descartes, 67000 Strasbourg CEDEX, France}

\email{vdotsenko@unistra.fr}

\date{}

\begin{abstract}
We prove a conjecture of Chapoton from 2010 stating that the pre-Lie operad, as a Lie algebra in the symmetric monoidal category of linear species, is freely generated by the free operad on the species of cyclic Lie elements. 
\end{abstract}

\maketitle

\section*{Introduction}
Recall that a pre-Lie algebra is a vector space with a binary operation $a,b\mapsto a\triangleleft b$ satisfying the identity
 \[
(a_1\triangleleft a_2)\triangleleft a_3 - a_1\triangleleft (a_2\triangleleft a_3) = (a_1\triangleleft a_3)\triangleleft a_2- a_1\triangleleft (a_3\triangleleft a_2).
 \]
This is a remarkable algebraic structure appearing in many different areas of mathematics, including category theory, combinatorics, deformation theory, differential geometry, and mathematical physics, to name a few. It seems to have first appeared independently in the work of Gerstenhaber \cite{MR161898} and Vinberg \cite{MR0158414} in 1960s. In particular, in each pre-Lie algebra the commutator $[a_1,a_2]=a_1\triangleleft a_2-a_2\triangleleft a_1$ satisfies the Jacobi identity, and hence defines a Lie algebra structure on the same vector space.

A fact that could have been discovered by Cayley \cite{cayley_2009} in 1857, but had to wait till the work of Chapoton and Livernet \cite{MR1827084} in 2000, states that free pre-Lie algebras can be described using rooted trees. For the case of the free pre-Lie algebra on one generator, the corresponding commutator algebra, the Lie algebra of rooted trees, plays an important role in the celebrated construction of Connes and Kreimer \cite{MR1633004}. A result going back to the work of Foissy \cite{MR1935036} in 2001 states that the Lie algebra of rooted trees is a free Lie algebra; in fact, it is easy to generalize that proof to the case of the commutator algebra of any free pre-Lie algebra. In 2007, Chapoton~\cite{MR2682539} and Bergeron and Livernet \cite{MR2667815}, established the same result in a more functorial way, proving that the pre-Lie operad is free as a Lie algebra in the symmetric monoidal category of linear species. 

In 2010, Bergeron and Loday proved \cite{MR2763748} that in any pre-Lie algebra, the symmetrized product $a_1\bullet a_2=a_1\triangleleft a_2+a_2\triangleleft a_1$ does not satisfy any identity other than commutativity, which is a striking contrast with the symmetrized product in associative algebras \cite{MR186708}. Soon after their work was circulated, Chapoton proposed in~\cite{MR3091764} a beautiful conjecture substantially generalizing this result. To state that conjecture precisely, let us recall a definition of the species of cyclic Lie elements (whose components are known as the Whitehouse modules for symmetric groups \cite{MR1394355,Whitehouse}). In the context of this paper, it is convenient to pick the version of the definition describing it in the realm of modules over non-cyclic operads. Specifically, the species $\CL$ is the underlying species of the right $\Lie$-module generated by one element $(-,-)$ of arity $2$ satisfying the symmetry condition $(a_1,a_2)=(a_2,a_1)$ and the right $\Lie$-module relation  
 \[
([a_1,a_2],a_3)=(a_1,[a_2,a_3]) 
 \]
with the Lie bracket $[-,-]\in\Lie(2)$; intuitively, this corresponds to thinking of cyclic Lie elements as universal values of invariant bilinear forms on Lie algebras, see \cite[Sec.~4]{MR1358617}. The conjecture of Chapoton states that the species that freely generates the pre-Lie operad as a Lie algebra is isomorphic to the underlying species of the free operad generated by the species $\CL$. In this paper, a simple proof of that conjecture is given.  

\section{The main theorem}

All vector spaces and chain complexes are defined over a ground field~$\k$ of zero characteristic. For necessary information about operads, we refer the reader to \cite{MR2954392}; it is important to think of operads as monoids in the category of linear species, since that, in particular, greatly helps the intuition on operadic modules and bimodules upon which the below proof crucially relies. In particular, we shall repeatedly use the fact that a morphism of operads $\phi\colon\calP\to\calQ$ makes $\calQ$ into a $\calP$-bimodule. If one uses the monoidal definition of operads to define bimodules, that fact is a tautology: both the left action $\calP\circ\calQ\to\calQ$ and the right action $\calQ\circ\calP\to\calQ$ arise from the morphism $\phi$ and the operad structure of $\calQ$ as composites $\calP\circ\calQ\to\calQ\circ\calQ\to\calQ$ and $\calQ\circ\calP\to\calQ\circ\calQ\to\calQ$, respectively.  

\begin{theorem}
We have an isomorphism of $\Lie$ algebras 
 \[
\PL\cong \Lie\circ\calT(\CL).
 \]
More precisely, if we denote by $\calY$  the subspecies of $\PL$ spanned by all elements of the form $\ell\triangleleft\ell'+\ell'\triangleleft\ell$, where $\ell,\ell'$ are Lie monomials, the following statements hold:
\begin{enumerate}
\item we have an isomorphism $\calY\cong\CL$;
\item the suboperad $\calP$ generated by $\calY$ is free;
\item the $\Lie$-subalgebra of $\PL$ generated by $\calP$ is free and coincides with $\PL$.
\end{enumerate}
\end{theorem}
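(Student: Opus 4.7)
The plan is to establish the three numbered claims in sequence; their combination yields the main isomorphism $\PL\cong\Lie\circ\calT(\CL)$ of $\Lie$-algebras in the symmetric monoidal category of linear species.

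For (1), I would construct the natural morphism of right $\Lie$-modules $\CL\to\PL$ sending the generator $(-,-)\in\CL(2)$ to the symmetrization $a_1\triangleleft a_2+a_2\triangleleft a_1$; its image is $\calY$ by definition. The symmetry relation is immediate, and the right $\Lie$-module relation
\[
([a_1,a_2],a_3)=(a_1,[a_2,a_3])
\]
expands into an equality of two eight-term sums in $\PL(3)$ which is reduced to zero by a few applications of the pre-Lie identity (the symmetry of the associator in its last two arguments). To upgrade the resulting surjection $\CL\twoheadrightarrow\calY$ to an isomorphism, I would compare Frobenius characters: the Whitehouse character of $\CL$ is classical, and the character of $\calY$ can be read off from the rooted-tree basis of $\PL$.

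For (2), the universal property of the free operad extends $\CL\hookrightarrow\PL$ to an operad morphism $\Phi\colon\calT(\CL)\to\PL$ whose image is $\calP$; injectivity of $\Phi$ would be established by a generating-series comparison, computing the Frobenius character of $\calT(\CL)$ from the functional equation satisfied by the character of a free operad applied to the Whitehouse character, and matching it with the character of $\calP\subset\PL$. For (3), I would invoke the Bergeron--Livernet/Chapoton theorem that $\PL$ is free as a Lie algebra in species, write $\PL\cong\Lie\circ\calX$ for some generating species $\calX$, and identify $\calX$ with $\calP$ by noting that every pre-Lie monomial differs from a symmetrized one modulo the image of the Lie bracket in $\PL$, so $\calP$ surjects onto the indecomposables $\calX$; a final character comparison against $\Lie\circ\calT(\CL)$ via the plethystic exponential formula simultaneously confirms freeness of the $\Lie$-subalgebra generated by $\calP$ and its coincidence with all of $\PL$.

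The main obstacle is the freeness assertion in (2): unexpected operadic relations among iterated compositions of symmetrized Lie monomials need to be ruled out. The Frobenius character computation above is the cleanest tool I know; a more computational fallback would be to present $\Phi$ as a morphism of shuffle operads and produce a Gröbner basis for its kernel, verifying that it reduces to zero.
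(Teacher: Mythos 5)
There is a genuine gap, and it sits exactly at the step your part (1) treats as routine. The relation $([a_1,a_2],a_3)=(a_1,[a_2,a_3])$ does \emph{not} hold for the symmetrized product in $\PL$: the element $[a_1,a_2]\bullet a_3-a_1\bullet[a_2,a_3]$ is a nonzero vector of the $9$-dimensional space $\PL(3)$ of rooted trees (writing $C_a$ for the corolla rooted at $a$ and $L_{abc}$ for the linear tree $a\to b\to c$, one finds $C_1-2C_2+C_3+L_{132}+L_{312}-L_{213}-L_{231}$), and no amount of applying the pre-Lie identity reduces it to zero. Consequently the right $\Lie$-module morphism $\CL\to\PL$ you want to define by sending $(-,-)$ to $-\bullet-$ does not exist, and the surjection $\CL\twoheadrightarrow\calY$, which drives your entire argument, is not available this way. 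The actual proof is organized around precisely this obstruction: one introduces a decreasing filtration $F^\bullet\PL$ by $\Lie$-sub-bimodules, coming from a weight on tree tensors in the presentation of $\PL$ by $[-,-]$ and $-\bullet-$, and shows by manipulating the defining relation that $[a_1,a_2]\bullet a_3-a_1\bullet[a_2,a_3]$ lies one filtration step deeper than either of its two terms. The cyclic-Lie relation therefore holds only in the \emph{associated graded} $\Lie$-bimodule, where one obtains the surjection from $\CL$; it is then transported back to $\calY$ using the fact that $\k S_n$-modules in characteristic zero do not change isomorphism class under passage to the associated graded.

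Your parts (2) and (3) also lean on character comparisons in a way that is circular: to prove injectivity of $\Phi\colon\calT(\CL)\to\PL$ by matching characters you would have to compute the character of $\calP=\mathrm{im}\,\Phi$ independently, which is tantamount to the claim being proved. What makes the dimension count bite is an a priori \emph{surjection} $\Lie\circ\calT(\calY)\twoheadrightarrow\PL$ of left $\Lie$-modules, obtained from the unique factorization of every tree tensor in the free operad, $\calT([-,-],-\bullet-)\cong\calT([-,-])\circ\calT(\overline{\calY})$, where $\overline{\calY}$ is spanned by tree tensors whose only $\bullet$-labelled vertex is the root. Composing this with $\CL\twoheadrightarrow\calY$ gives a surjection $\Lie\circ\calT(\CL)\twoheadrightarrow\PL$, and Chapoton's equality of dimensions (which you correctly identify as the closing move) then forces every map in the chain to be an isomorphism, yielding (1), (2) and (3) at once. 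Without the two surjections, however, there is nothing for that dimension count to act on, so both the false identity in (1) and the missing left-module factorization need to be repaired before the rest of your outline can function.
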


\begin{proof}
While the operad $\PL$ is defined in terms of one operation $\triangleleft$, namely as the quotient of the free operad by the operadic ideal generated by the pre-Lie identity, we shall use a different presentation, using the operations $[a_1,a_2]=a_1\triangleleft a_2-a_2\triangleleft a_1$ and $a_1\bullet a_2=a_1\triangleleft a_2+a_2\triangleleft a_1$ mentioned in the introduction. It is known \cite{MR3203367} that the $S_3$-module of identities satisfied by these operations is generated by the Jacobi identity for $[a_1,a_2]$ and the identity 
\begin{multline}\label{eq:relPL}
(a_1\bullet a_2)\bullet a_3 - a_1\bullet (a_2 \bullet a_3) - a_1\bullet [a_2, a_3] - [a_1, a_2]\bullet a_3 \\- 2[a_1, a_3]\bullet a_2+
 [a_1, a_2\bullet a_3] + [a_1\bullet a_2, a_3] + [[a_1, a_3], a_2] = 0 ,
\end{multline}

Let us start with considering the free operad $\calT([-,-],-\bullet-)$ generated by the anti-symmetric operation $[-,-]$ and the symmetric operation $-\bullet-$ (before imposing either the Jacobi identity or Identity \eqref{eq:relPL}). This operad has a suboperad $\calT([-,-])$, and therefore, we may consider it as a bimodule over that suboperad, with the action induced by the inclusion and the operad structure. The argument that follows is a bit intricate, and so we split it into eight relatively simple steps.

\textsl{Step one: factorizing elements of the free operad. }
The free operad operad $\calT([-,-],-\bullet-)$ has components indexed by finite sets $I$; those components are spanned by \emph{tree tensors}, which are rooted binary trees whose leaves are in a bijection with $I$, and whose internal vertices are labelled $[-,-]$ or $-\bullet-$ each, modulo the action of tree isomorphisms that take into account the symmetries of labels of internal vertices, so that, for example, 
 \[
\lbincomb{[-,-]}{-\bullet-}{2}{1}{3}=\lbincomb{[-,-]}{-\bullet-}{1}{2}{3}=-\rbincomb{[-,-]}{-\bullet-}{3}{1}{2} .
 \]
Let us consider a typical tree tensor $T$ in our free operad. We may consider the maximal subtree of the underlying tree that contains the root of the tree and all whose internal vertices are labelled $[-,-]$; that includes the possibility that the corresponding subtree is the trivial tree (corresponding to the unit element of the free operad). If we denote by $S$ the tree tensor corresponding to this subtree, and by $T_1,\ldots,T_p$ the tree tensors corresponding to the subtrees of the underlying tree of $T$ grafted at the leaves of $S$, in the free operad we have thus the unique factorization
 \[
T=\gamma(S;T_1,\ldots,T_p), 
 \] 
where each of the underlying trees of each tree tensor $T_i$ is either a trivial tree or a tree whose root vertex is labelled $-\bullet-$.\\

\textsl{Step two: bimodule filtration of the free operad. } Using the obtained factorization, one can define a sort of grading of the free operad. Concretely, if $T$ is a tree tensor factorized in the form $T=\gamma(S;T_1,\ldots,T_p)$ as above, we define the weight of $T$ as the sum of $p$ (arity of $S$) and the number of vertices labelled $-\bullet-$ in $T$. Note that this weight does not make our free operad a weight graded operad: it is not  additive with respect to operadic compositions. 

Let us show that the weight grading we just defined has some compatibility with the $\calT([-,-])$-bimodule structure on $\calT([-,-],-\bullet-)$. First, we see that the left action of $[-,-]$ respects the weight in a very strong sense: for $\alpha$ of weight $\ell$ and $\alpha'$ of weight $\ell'$, the element $[\alpha,\alpha']$ has weight exactly $\ell+\ell'$. Indeed, both the parameter $p$ and the number of vertices labelled $-\bullet-$ are additive in this case. Next, we observe that the right action of $[-,-]$ exhibits certain compatibility with the weight: for $\alpha$ of weight $\ell$, we have $\alpha\circ_i[-,-]$ is at least $\ell$. Indeed, our substitution may increase the parameter $p$, and does not change the number of vertices labelled $-\bullet-$. This means that if we define the decreasing filtration $F^\bullet\calT$ of the free operad with the filtered component $F^\ell\calT$ being the linear span of all tree tensors of weight at least $\ell$, this filtration is a $\calT([-,-])$-bimodule filtration. \\

\textsl{Step three: further factorization in the free operad. }
For an element $T=\gamma(S;T_1,\ldots,T_p)$ of the free operad factorized as above, each of the tree tensors $T_i$ above can be uniquely written as an iterated operadic composition of tree tensors for which \emph{only} the root is labelled $-\bullet-$, and all other internal vertices are labelled $[-,-]$; this includes the possibility of the empty composition, for which the result is the trivial tree. Indeed, to find such composition, one should locate all internal vertices of $T_i$ labelled $-\bullet-$, and cut the underlying tree of $T_i$ into maximal subtrees rooted at such vertices with all other internal vertices labelled $[-,-]$. Overall, our factorization can be summarized by the formula
 \[
\calT([-,-],-\bullet-)\cong \calT([-,-])\circ \calT(\overline{\calY}),
 \]
where $\overline{\calY}$ is the subspecies of $\calT([-,-],-\bullet-)$ spanned by all elements of the form $\alpha\bullet\alpha'$, where $\alpha,\alpha'\in \calT([-,-])$. Indeed, $\overline{\calY}$ is precisely the species spanned by tree tensors for which only the root is labelled $-\bullet-$, and all other internal vertices are labelled $[-,-]$, and taking the free operad on $\overline{\calY}$ amounts to taking all possible iterated operadic compositions. \\

\textsl{Step four: passing to the quotient. }
Let us now record what all this means for the quotient operad $\PL$. Each element of $\PL$ is a linear combination of cosets of tree tensors, each of which can be written in the way described above, that is as a composition of a coset of a tree tensor all whose internal vertices are labelled $[-,-]$ with iterated operadic compositions of cosets of tree tensors whose root is labelled $-\bullet-$ and all other internal vertices are labelled $[-,-]$. The former cosets clearly span the suboperad $\Lie\subset\PL$, and the latter cosets span the subspecies $\calY\subset\PL$. Thus, the isomorphism 
 \[
\calT([-,-],-\bullet-)\cong \calT([-,-])\circ \calT(\overline{\calY}),
 \]
induces a surjective map of left $\Lie$-modules
 \[
\pi\colon \Lie\circ\calT(\calY)\twoheadrightarrow\PL.
 \]
Let us also note that one can define a filtration $F^\bullet\PL$ of the underlying species of the operad $\PL$ analogously to the filtration of the underlying species of the free operad defined above; namely, the filtered component $F^\ell\PL$ is defined as the species of elements that can be written as linear combinations of tree tensors of weight at least $\ell$. The same argument as above (at the end of the ``step two'' part) shows that this filtration is compatible with the $\Lie$-bimodule structure on $\PL$ obtained from the morphism of operads $\Lie\to\PL$ (at this point, just existence of a linear combination of tree tensors of appropriate weight is used, so non-uniqueness coming from the fact that we imposed some relations does not create problems). Note that this filtration is \emph{not} compatible with the full operad structure; however, as the remaining part of the proof will show, compatibility with the $\Lie$-bimodule structure is sufficient to furnish a proof of our main result. This is perhaps the most subtle aspect of our argument. \\

\textsl{Step five: passing to the associated graded bimodule.}
Let us now consider the associated graded $\Lie$-bimodule $\gr_F\PL$. To make a key advance in the proof, we shall do a small calculation.  Denoting \eqref{eq:relPL} by~$r$, and computing $\frac13(r-2r.(23))$, we obtain the relation
\begin{multline*}
[a_1, a_2]\bullet a_3-a_1\bullet [a_2, a_3]=\\
\frac13\left(-(a_1\bullet a_2)\bullet a_3- a_1\bullet (a_2 \bullet a_3)+2(a_1\bullet a_3)\bullet a_2- [a_1\bullet a_2, a_3] \right.\\
\left. + [a_1,a_2\bullet a_3] +2 [a_1\bullet a_3, a_2]  + [a_1,[a_2, a_3]]
 + [[a_1, a_2], a_3]\right).
\end{multline*}
Let us understand precisely the filtration levels of the elements in this formula. First, we note that, if we consider the elements on the right, they are either compositions of the unit element with elements of the $-\bullet-$ weight two, or compositions of Lie monomials of arity two with elements of the $-\bullet-$ weight one, or, finally, Lie monomials of arity three, so all of them belong to $F^3\PL$. However, the elements whose difference we compute on the left correspond to the tree tensors 
 \[
\lbincomb{-\bullet-}{[-,-]}{1}{2}{3} \text{ and }\rbincomb{-\bullet-}{[-,-]}{1}{2}{3}
 \]
which have $-\bullet-$ as the root label. We note that in the free operad the desired decomposition $\gamma(S;T_1,\ldots,T_p)$ of each of those tree tensors is trivial: we take $S$ to be the unit element, $p=1$, and $T_1$ equal to the corresponding tree tensor. This immediately implies that these elements belong to $F^2\PL$, as the arity of the unit element and the $-\bullet-$ weight add up to $2$. It remains to check that  these tree tensors individually do not belong to $F^3\PL$ (in principle, it could be possible that imposing relations leads to a way to represent them as linear combinations of elements of larger weight). If that were true, we could write each of the three tree tensors with three leaves that have $-\bullet-$ as the root label and $[-,-]$ as the label of the non-root vertex as a linear combination of tree tensors of weight~$3$. However, this would imply that the $S_3$-module generated by \eqref{eq:relPL} is at least three-dimensional, while in reality it is of dimension two, so we have a contradiction, and our tree tensors do not belogn to $F^3\PL$.  

This implies that in the associated graded $\Lie$-bimodule, we have
 \[
[a_1, a_2]\bullet a_3=a_1\bullet [a_2, a_3].
 \]
This relation is precisely the defining relation of the cyclic Lie module. Since $\calY$ is clearly a right $\Lie$-submodule of $\PL$, and since $F^\bullet\PL$ is a filtration by $\Lie$-bimodules, we conclude that there is a surjective map of right $\Lie$-modules 
 \[
\tilde\phi\colon\CL\twoheadrightarrow \tilde\calY
 \]
where $\tilde\calY$ is the right submodule of $\gr_F\PL$ generated by the coset of the element $-\bullet-$. \\

\textsl{Step six: dimension counting and bijectivity. } Since components of these species are $\k S_n$-modules and $\k$ is of characteristic zero, passing to the associated graded does not change the module structure, our previous argument implies that there is a surjective map of right $\Lie$-modules
 \[
\phi\colon\CL\twoheadrightarrow\calY,
 \]
and hence a surjective map of linear species 
 \[
\Lie\circ\calT(\CL)\twoheadrightarrow\Lie\circ\calT(\calY)\twoheadrightarrow\PL.
 \]
However, a calculation already done in \cite[Sec.~9]{MR3091764} shows that the species $\Lie\circ\calT(\CL)$ and $\PL$ are of the same dimension in each arity, so a surjective map between the species $\Lie\circ\calT(\CL)$ and $\PL$ must be an isomorphism. Our surjection is constructed using two different surjections, and since the final result is an isomorphism, each of those surjections is an isomorphism. In particular, the surjection 
 \[
\phi\colon\CL\twoheadrightarrow\calY,
 \]
is an isomorphism, so we have a species isomorphism $\CL\cong\calY$, proving the first claim of the theorem. Furthermore, the surjective map of left $\Lie$-modules
 \[
\Lie\circ\calT(\calY)\twoheadrightarrow\PL
 \]
is an isomorphism, which implies both that the suboperad of $\PL$ generated by $\calY$ is free and the left $\Lie$-module generated by that suboperad is free and coincides with $\PL$, proving the last two claims of the theorem. 
\end{proof}

\begin{remark}
In our previous work \cite[Th.~3]{MR3203367}, an alternative proof of the result of Bergeron and Loday \cite{MR2763748} on freeness of the suboperad of $\PL$ generated by $-\bullet-$ was suggested. However, that proof contains a gap that cannot be fixed by any moderate adjustment: in fact, it is possible to show that there is no quadratic Gr\"obner basis or convergent rewriting system of the operad $\PL$ with the indicated leading terms. The proof above uses the same presentation of $\PL$ in a conceptually different way, and, in particular, furnishes a new proof of the result of Bergeron and Loday.   
\end{remark}

\section*{Funding} 
This research was supported by Institut Universitaire de France, by the University of Strasbourg Institute for Advanced Study through the Fellowship USIAS-2021-061 within the French national program ``Investment for the future'' (IdEx-Unistra), and by the French national research agency project ANR-20-CE40-0016.

\section*{Acknowledgements} I am grateful to Frédéric Chapoton for useful discussions dating back to 2010 and for comments on the first draft of this article, and to Paul Laubie whose ongoing research project made me return to this question. I also thank the anonymous referee for insisting on making the argument very precise, leaving no stone unturned.

\bibliographystyle{plain}
\bibliography{biblio}

\begin{thebibliography}{10}

\bibitem{MR2667815}
Nantel Bergeron and Muriel Livernet.
\newblock A combinatorial basis for the free {L}ie algebra of the labelled
  rooted trees.
\newblock {\em J. Lie Theory}, 20(1):3--15, 2010.

\bibitem{MR2763748}
Nantel Bergeron and Jean-Louis Loday.
\newblock The symmetric operation in a free pre-{L}ie algebra is magmatic.
\newblock {\em Proc. Amer. Math. Soc.}, 139(5):1585--1597, 2011.

\bibitem{cayley_2009}
Arthur Cayley.
\newblock {\em On the Theory of the Analytical Forms called Trees}, volume~3 of
  {\em Cambridge Library Collection - Mathematics}, page 242–246.
\newblock Cambridge University Press, 2009.

\bibitem{MR2682539}
Fr\'{e}d\'{e}ric Chapoton.
\newblock Free pre-{L}ie algebras are free as {L}ie algebras.
\newblock {\em Canad. Math. Bull.}, 53(3):425--437, 2010.

\bibitem{MR3091764}
Fr\'{e}d\'{e}ric Chapoton.
\newblock Fine structures inside the {P}re{L}ie operad.
\newblock {\em Proc. Amer. Math. Soc.}, 141(11):3723--3737, 2013.

\bibitem{MR1827084}
Fr\'{e}d\'{e}ric Chapoton and Muriel Livernet.
\newblock Pre-{L}ie algebras and the rooted trees operad.
\newblock {\em Internat. Math. Res. Notices}, (8):395--408, 2001.

\bibitem{MR3203367}
Vladimir Dotsenko.
\newblock Freeness theorems for operads via {G}r\"{o}bner bases.
\newblock In {\em O{PERADS} 2009}, volume~26 of {\em S\'{e}min. Congr.}, pages
  61--76. Soc. Math. France, Paris, 2013.

\bibitem{MR1935036}
L.~Foissy.
\newblock Finite-dimensional comodules over the {H}opf algebra of rooted trees.
\newblock {\em J. Algebra}, 255(1):89--120, 2002.

\bibitem{MR161898}
Murray Gerstenhaber.
\newblock The cohomology structure of an associative ring.
\newblock {\em Ann. of Math. (2)}, 78:267--288, 1963.

\bibitem{MR1358617}
E.~Getzler and M.~M. Kapranov.
\newblock Cyclic operads and cyclic homology.
\newblock In {\em Geometry, topology, \& physics}, Conf. Proc. Lecture Notes
  Geom. Topology, IV, pages 167--201. Int. Press, Cambridge, MA, 1995.

\bibitem{MR186708}
C.~M. Glennie.
\newblock Some identities valid in special {J}ordan algebras but not valid in
  all {J}ordan algebras.
\newblock {\em Pacific J. Math.}, 16:47--59, 1966.

\bibitem{MR1633004}
Dirk Kreimer.
\newblock On the {H}opf algebra structure of perturbative quantum field
  theories.
\newblock {\em Adv. Theor. Math. Phys.}, 2(2):303--334, 1998.

\bibitem{MR2954392}
Jean-Louis Loday and Bruno Vallette.
\newblock {\em Algebraic operads}, volume 346 of {\em Grundlehren der
  mathematischen Wissenschaften [Fundamental Principles of Mathematical
  Sciences]}.
\newblock Springer, Heidelberg, 2012.

\bibitem{MR1394355}
Alan Robinson and Sarah Whitehouse.
\newblock The tree representation of {$\Sigma_{n+1}$}.
\newblock {\em J. Pure Appl. Algebra}, 111(1-3):245--253, 1996.

\bibitem{MR0158414}
\`E.~B. Vinberg.
\newblock The theory of homogeneous convex cones.
\newblock {\em Trudy Moskov. Mat. Ob\v{s}\v{c}.}, 12:303--358, 1963.

\bibitem{Whitehouse}
Sarah~Ann Whitehouse.
\newblock {\em Gamma (co)homology of commutative algebras and some related
  representations of the symmetric group}.
\newblock PhD thesis, University of Warwick, 1994.

\end{thebibliography}

\end{document}